\documentclass[11pt,letterpaper,reqno]{amsart}
\newcommand{\ppar}[1]{\par\addvspace{\medskipamount}\noindent{\bfseries #1.}\enspace\ignorespaces}
\usepackage{amsmath,amssymb,amsthm,amsfonts,mathtools}
\usepackage{mathrsfs}
\usepackage{xcolor}
\usepackage[T1]{fontenc}
\usepackage{microtype}
\usepackage{aliascnt}

\usepackage[colorlinks=true,
  linkcolor=blue,
  citecolor=blue,
  urlcolor=blue]{hyperref}
\usepackage[nameinlink,capitalize,noabbrev]{cleveref}

\allowdisplaybreaks
\numberwithin{equation}{section}

\newtheorem{theorem}{Theorem}[section]

\newaliascnt{problem}{theorem}

\aliascntresetthe{problem}

\newaliascnt{proposition}{theorem}

\aliascntresetthe{proposition}

\newaliascnt{lemma}{theorem}
\newtheorem{lemma}[lemma]{Lemma}
\aliascntresetthe{lemma}

\newaliascnt{corollary}{theorem}

\aliascntresetthe{corollary}

\newaliascnt{remark}{theorem}

\aliascntresetthe{remark}

\crefname{theorem}{Theorem}{Theorems}
\Crefname{theorem}{Theorem}{Theorems}
\crefname{problem}{Problem}{Problems}
\Crefname{problem}{Problem}{Problems}
\crefname{proposition}{Proposition}{Propositions}
\Crefname{proposition}{Proposition}{Propositions}
\crefname{lemma}{Lemma}{Lemmas}
\Crefname{lemma}{Lemma}{Lemmas}
\crefname{corollary}{Corollary}{Corollaries}
\Crefname{corollary}{Corollary}{Corollaries}
\crefname{remark}{Remark}{Remarks}
\Crefname{remark}{Remark}{Remarks}

\newaliascnt{claim}{theorem}

\aliascntresetthe{claim}

\newaliascnt{question}{theorem}

\aliascntresetthe{question}

\newaliascnt{conjecture}{theorem}
\newtheorem{conjecture}[conjecture]{Conjecture}
\aliascntresetthe{conjecture}

\theoremstyle{definition}

\newaliascnt{definition}{theorem}
\newtheorem{definition}[definition]{Definition}
\aliascntresetthe{definition}

\newaliascnt{example}{theorem}

\aliascntresetthe{example}

\crefname{claim}{Claim}{Claims}
\Crefname{claim}{Claim}{Claims}
\crefname{question}{Question}{Questions}
\Crefname{question}{Question}{Questions}
\crefname{conjecture}{Conjecture}{Conjectures}
\Crefname{conjecture}{Conjecture}{Conjectures}
\crefname{definition}{Definition}{Definitions}
\Crefname{definition}{Definition}{Definitions}
\crefname{example}{Example}{Examples}
\Crefname{example}{Example}{Examples}

\newcommand{\one}{\mathbf{1}}

\newcommand{\normF}[1]{\left\lVert #1\right\rVert_F}

\newcommand{\cK}{\left(1-\frac1\omega\right)}

\begin{document}

\title{A positive square-energy strengthening of Tur\'an's theorem}

\author[Y.~Liu]{Yinchen Liu}
\address{Institute for Interdisciplinary Information Sciences, Tsinghua University, Beijing 100084, P. R. China}
\email{liuyinch23@mails.tsinghua.edu.cn}

\author[Q.~Tang]{Quanyu Tang}
\address{School of Mathematical Sciences, University of Science and Technology of China, Hefei 230026, P. R. China}
\email{tangquanyu827@gmail.com}

\author[S.~Zhang]{Shengtong Zhang}
\address{Department of Mathematics, Stanford University, Stanford, CA 94305, USA}
\email{stzh1555@stanford.edu}

\subjclass[2020]{Primary 05C50; Secondary 05C35, 15A42}

\keywords{positive square energy, clique number, spectral Tur\'an theorem,
doubly nonnegative matrices, random greedy algorithm}

\begin{abstract}
Let $G$ be an $n$-vertex graph with clique number $\omega(G)$, and let $s^+(G)$ denote the sum of the squared positive adjacency eigenvalues. We prove that
$$
\sqrt{s^+(G)}\le\left(1-\frac{1}{\omega(G)}\right)n.
$$
This strengthens Wilf's classical spectral Tur\'{a}n theorem and resolves a conjecture of Elphick and Wocjan. Adopting the relaxation of our companion paper on the square-energy conjecture, we reduce the theorem to a Motzkin--Straus inequality for doubly nonnegative matrices, which we prove via a local inverse-probability estimate for the Caro--Wei greedy algorithm on the complement. 
\end{abstract}

\maketitle

\section{Introduction}

Throughout the paper, graphs are finite and simple, and a graph is
\emph{nonempty} if it contains at least one edge.  Let $G=(V,E)$ be a
graph on $n$ vertices and $m$ edges, let $A$ be its adjacency matrix, and
write $\lambda_1\ge \lambda_2\ge\cdots\ge\lambda_n$ for the eigenvalues of
$A$.  For $v\in V$, let $N_G(v)$ denote the open neighborhood of $v$, and
write $d_G(v)=|N_G(v)|$.  The clique number of $G$ is denoted by
$\omega=\omega(G)$.

Mantel's theorem, proved in 1907, determines the maximum number of edges in a
triangle-free graph \cite{Mantel1907}. Tur\'an's 1941 theorem extended this
result to $K_{r+1}$-free graphs for arbitrary $r$ \cite{Turan1941}: let
$T_r(n)$ denote the complete $r$-partite graph on $n$ vertices whose part
sizes differ by at most one, and let $t_r(n)=|E(T_r(n))|$. Then every
$K_{r+1}$-free graph on $n$ vertices has at most $t_r(n)$ edges. In
particular, a graph with clique number $\omega$ satisfies
\[
m\le t_\omega(n)\le \frac12\left(1-\frac1\omega\right)n^2.
\]

A continuous formulation of Tur\'an's theorem was obtained by Motzkin and
Straus \cite{MotzkinStraus1965}:
\[
  \max\bigl\{x^{\top}Ax:x\ge0,\ \one^{\top}x=1\bigr\}
  =1-\frac1\omega,
\]
where $\one$ denotes the all-ones vector. Applying the Motzkin--Straus
inequality to a nonnegative principal eigenvector, Wilf~\cite{Wilf1986}
deduced the spectral clique bound
\begin{equation}\label{eq:wilf}
  \lambda_1\le \left(1-\frac1\omega\right)n.
\end{equation} 
Since $\lambda_1\ge 2m/n$, inequality \eqref{eq:wilf} recovers Tur\'an's
theorem in the form displayed above.

Nikiforov~\cite[Theorem~2.1]{Nikiforov2002} proved the edge-sensitive refinement
\begin{equation}\label{eq:nikiforov}
  \lambda_1^2\le 2m\left(1-\frac1\omega\right),
\end{equation}
confirming a conjecture of Edwards and
Elphick~\cite{EdwardsElphick1983}. Bollob\'as and
Nikiforov later conjectured that, except for complete graphs, the left-hand
side of \eqref{eq:nikiforov} can be replaced by
$\lambda_1^2+\lambda_2^2$ \cite[Conjecture~1]{BollobasNikiforov2007}.

The \emph{positive and negative square energies} of $G$ are
\[
  s^+(G):=\sum_{\lambda_i>0}\lambda_i^2,
  \qquad
  s^-(G):=\sum_{\lambda_i<0}\lambda_i^2.
\]
Wocjan and Elphick~\cite{WocjanElphick2013} introduced these quantities
in a conjectural strengthening of Hoffman's spectral lower bound on the
chromatic number~\cite{Hoffman1970}; their conjecture was proved by Ando
and Lin~\cite{AndoLin2015}. Elphick, Farber, Goldberg, and
Wocjan~\cite{ElphickFarberGoldbergWocjan2016} later initiated the systematic study of square energies and conjectured that
every connected graph $G$ on $n$ vertices satisfies
$\min\{s^+(G),s^-(G)\}\ge n-1$, recently resolved in~\cite{LiuTangZhang2026}. Positive and negative square
energies have since been studied as graph parameters in their own
right, through conic optimization, and as part of the broader theory
of $p$-energies; see
\cite{AbiadEtAl2023,AkbariEtAl2025Linear,AkbariEtAl2025PEnergy,AkbariEtAl2025Refinement,CoutinhoSpierZhang2024,ElphickLinz2024,ElphickTangZhang2026,TangLiuWang2026,Zhang2024}.

Since $s^+(G)\ge \lambda_1^2$, it is natural to ask whether Wilf's
inequality~\eqref{eq:wilf} remains valid with $\lambda_1$ replaced by
$\sqrt{s^+(G)}$.\footnote{The same bound for $s^-$ is trivial: since
$s^-=2m-s^+\le 2m-\lambda_1^2$ and $\lambda_1\ge 2m/n$, we get
$s^-(G)\le\frac{n^2}4$.}  The following conjecture was first stated by Elphick
and Wocjan in 2018~\cite{ElphickWocjan2018} and later appeared
as Conjecture~1 in the published paper of Elphick, Linz, and
Wocjan~\cite{ElphickLinzWocjan2024}.

\begin{conjecture}\label{conj:EW}
For every graph $G$ on $n\ge 1$ vertices,
\[
  \frac{n}{n-\sqrt{s^+(G)}}\le \omega(G).
\]
\end{conjecture}

Elphick, Linz, and Wocjan proved \cref{conj:EW} for triangle-free graphs,
weakly perfect graphs, Kneser graphs, and almost all graphs
\cite{ElphickLinzWocjan2024}.  Further classes were subsequently treated
by Jadav, Madyastha, Raut, and Singh~\cite{JadavEtAl2025}.  The conjecture
was also recorded as Conjecture~2 in the survey of Liu and
Ning~\cite{LiuNing2023}.

Our main result resolves \cref{conj:EW} in the affirmative.  We state
it in the following equivalent form.

\begin{theorem}\label{thm:main}
For every graph $G$ on $n\ge1$ vertices with clique number $\omega$,
\[
  \sqrt{s^{+}(G)}
  \le \left(1-\frac1\omega\right)n.
\]
\end{theorem}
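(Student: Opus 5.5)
Write $A=A^+-A^-$ with $A^{\pm}$ positive semidefinite and $A^+A^-=0$, so that $s^+(G)=\normF{A^+}^2=\operatorname{tr}(A^+A)$. The abstract points to the relaxation: the matrix $A^+$ is PSD, and its entries are controlled by those of $A$. Indeed, the diagonal of $A$ is zero, so $A^+_{vv}=A^-_{vv}$, and for every pair of vertices $u,v$ we have $A^+_{uv}-A^-_{uv}=A_{uv}$. The statement to prove is $\operatorname{tr}(A^+A)\le (1-1/\omega)^2n^2$.

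The plan has three steps.

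\textbf{Step 1 (relaxation).} Set $X=A^+/\sqrt{s^+}$, so that $X\succeq0$ and $\normF{X}=1$. Then
\[
\sqrt{s^+}=\langle X,A\rangle=\sum_{uv\in E}2X_{uv}.
\]
The aim is to pass to a doubly nonnegative matrix $Y$, i.e.\ $Y\succeq 0$ and $Y\ge 0$ entrywise. The intended choice is $Y=A^+\circ A^+$ (Hadamard square), or a normalized version of it. This $Y$ is PSD by Schur's theorem and entrywise nonnegative, and
\[
\operatorname{tr}Y=\sum_v (A^+_{vv})^2, \qquad \one^\top Y\one=\normF{A^+}^2=s^+.
\]
I would then bound $\langle A^+,A\rangle$ by Cauchy--Schwarz:
\[
\langle A^+,A\rangle^2\le \Bigl(\sum_{uv\in E}(A^+_{uv})^2\Bigr)\cdot 2m .
\]
This is the wrong shape, since it produces $m$ rather than $n^2$. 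I would therefore follow the companion paper's relaxation: show that
\[
\sqrt{s^+}\le \max\Bigl\{\sum_{u\sim v} Y_{uv} : Y \text{ DNN},\ \operatorname{tr} Y\le n,\ \dots\Bigr\},
\]
reducing everything to the inequality $\langle Y,A\rangle\le (1-1/\omega)\,(\text{normalization of } Y)$.

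\textbf{Step 2 (Motzkin--Straus for DNN matrices).} The target is: for every DNN matrix $Y$,
\[
\sum_{uv\in E} 2Y_{uv}\le \Bigl(1-\frac1\omega\Bigr)\Bigl(\one^\top Y\one\Bigr),
\]
or a variant in which the diagonal is weighted appropriately. When $Y=xx^\top$ this is exactly Motzkin--Straus, and the DNN cone contains combinations of such rank-one matrices, so the inequality is plausible. However, DNN is strictly larger than completely positive for $n\ge5$, so the claim does not follow by convexity alone. Equivalently, the target is
\[
\langle Y, J-I-A\rangle+\operatorname{tr}Y\ge \frac1\omega\,\one^\top Y\one .
\]

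\textbf{Step 3 (proving Step 2 via Caro--Wei).} Run the random greedy algorithm on the complement $\bar G$: take a uniformly random ordering $\pi$ and build the greedy independent set $I_\pi$ of $\bar G$, which is a clique of $G$. Hence $|I_\pi|\le\omega$. Using the PSD property,
\[
\one_{I}^\top Y\one_{I}\ge0 \quad\text{and}\quad \Bigl(\sum_{v\in I}y_v\Bigr)^2\le |I|\sum_{v\in I}\|y_v\|^2,
\]
where $Y$ is the Gram matrix of vectors $y_v$, which have pairwise nonnegative inner products. I would aim for a local inverse-probability estimate: weight each vertex by $1/\Pr[v\in I_\pi]$, or a pair-dependent analogue $\Pr[u,v\in I_\pi]$, so that averaging the clique-restricted Cauchy--Schwarz over $\pi$ recovers every edge term $Y_{uv}$ with coefficient at least $1$. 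The key inequality to establish is that for adjacent $u,v$ in $G$ the greedy algorithm includes both of them with probability comparable to the product of their individual inclusion probabilities. Nonnegativity of $Y$ is what lets the non-clique pairs be discarded.

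\textbf{Main obstacle.} Step 3 is the crux. One needs a correlation or inverse-probability bound for the Caro--Wei process that is sharp enough to give exactly the constant $1/\omega$. The first thing I would try is to check the Turán graph, where equality must hold, in order to calibrate the weights.
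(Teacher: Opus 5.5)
\textbf{There is a genuine gap.} The inequality you set as the target of Step 2 is false, so no version of Step 3 can prove it.

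Take $G=C_5$, so $\omega=2$, with adjacency matrix $A$. Let $\phi=\frac{1+\sqrt5}{2}$ and put $Y=I+\phi^{-1}A$. The least eigenvalue of $A$ is $-\phi$, so $Y$ is doubly nonnegative. Yet
\[
  \sum_{uv\in E}2Y_{uv}=\frac{10}{\phi}\approx 6.18
  \;>\;5.59\approx\frac12\Bigl(5+\frac{10}{\phi}\Bigr)
  =\Bigl(1-\frac1\omega\Bigr)\one^{\top}Y\one .
\]
Writing $J-A=I+A_{\overline G}$, your Step 2 asks that $\one^\top Y\one\le\omega\,\langle I+A_{\overline G},Y\rangle$ for every DNN $Y$. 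That is exactly the claim that the DNN relaxation of the Motzkin--Straus program for $\alpha(\overline G)=\omega(G)$ is tight. That relaxation has value Schrijver's $\vartheta'(\overline G)$, which is $\sqrt5>2$ here. So the DNN-versus-completely-positive gap you flagged is real, and this linear target cannot be rescued.

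The fix is in Step 1, not in a cleverer Step 3. You already had $s^+=\operatorname{tr}(AA^+)=2\sum_{uv\in E}A^+_{uv}$ and $\one^\top M\one=s^+$ for $M=A^+\circ A^+$. The right continuation, rather than the unspecified maximization you wrote, is simply $A^+_{uv}\le\sqrt{M_{uv}}$. This reduces the theorem to
\[
  2\sum_{uv\in E}\sqrt{M_{uv}}\le\Bigl(1-\frac1\omega\Bigr)n\sqrt{\one^{\top}M\one}
  \qquad(M\text{ doubly nonnegative}).
\]
This inequality is not homogeneous in $M$, and it does hold in the example above ($7.86\le 8.36$).

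Your Step 3 also lacks the mechanism that produces the factor $n$. A Cauchy--Schwarz restricted to the pivot clique only sees pairs of pivots. Comparing $\Pr[u,v\in I_\pi]$ with the product of the marginals is not the relevant statistic. The paper's argument runs as follows.
\begin{itemize}
\item It uses the whole Caro--Wei \emph{block partition} and the separation probabilities $q_G(uv)=\Pr[u,v\text{ in different blocks}]$.
\item Each outcome has $r\le\omega$ blocks. Cauchy--Schwarz applied to the block vectors $Bz_i$, where $M=B^\top B$, bounds the $M$-mass of separated edges by $\frac12(1-\frac1r)\one^\top M\one$.
\item The expected separated mass equals $\sum_{uv\in E}M_{uv}\,q_G(uv)$. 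So weighted Cauchy--Schwarz yields the displayed inequality once one has the harmonic bound $\sum_{uv\in E}1/q_G(uv)\le\frac12(1-\frac1\omega)n^2$.
\item That bound comes from summing the local estimate $1/p_G(v)+\sum_{u\in N_G(v)}1/q_G(vu)\le n$ over all $v$, using $\sum_v p_G(v)\le\omega$ and Cauchy--Schwarz.
\item The local estimate is proved by induction on $n$. The induction combines the first-pivot recurrences with the fact that the reciprocal row sums of a positive matrix with unit diagonal and $r_{ij}r_{ji}\ge1$ add up to at most $1$.
\end{itemize}
None of these ingredients appears in your proposal.
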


The bound is tight: when $\omega$ divides $n$, the Tur\'an graph
$T_\omega(n)$ has exactly one positive eigenvalue, equal to
$\bigl(1-\frac1\omega\bigr)n$, so equality holds in \cref{thm:main}.

As a byproduct, our argument establishes the following weighted
Tur\'an stability
statement, in the sense of Erd\H{o}s and
Simonovits~\cite{Simonovits1968}, which may be of independent interest.  For a
partition $\mathcal P$ of $V(G)$ and nonnegative edge weights
$a=(a_e)_{e\in E(G)}$, define the \emph{separated mass}
\[
  W_{\mathcal P}(a)
  :=\sum_{\substack{uv\in E(G)\\
      u,\,v\text{ in different parts of }\mathcal P}} a_{uv}.
\]

\begin{theorem}
\label{thm:partition}
For every nonempty graph $G$ on $n$ vertices with clique number
$\omega$,
there is a probability distribution $\mathcal{D}_{\mathcal{P}}$ over
partitions of $V(G)$ into at most $\omega$ parts, such that for
\emph{any} choice of nonnegative edge weights $a_e$, we have
\[
  \mathbb{E}_{\mathcal P\sim\mathcal{D}_{\mathcal{P}}}\,W_{\mathcal P}(a)
  \;\ge\;
  \frac{2}{\bigl(1-\frac1\omega\bigr)n^2}
  \Bigl(\sum_{e\in E(G)}\sqrt{a_e}\Bigr)^{\!2}.
\]
\end{theorem}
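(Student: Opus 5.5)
The plan is to reduce the statement, via Cauchy--Schwarz, to a single combinatorial estimate on separation probabilities, and to take $\mathcal D_{\mathcal P}$ to be the partition produced by the random greedy (Caro--Wei) algorithm run on the complement $\overline G$. The algorithm is as follows.
\begin{enumerate}
\item Choose a uniformly random ordering $\pi$ of $V(G)$.
\item Scan the vertices in this order, adding a vertex to a set $K$ whenever it is adjacent in $G$ to every vertex already in $K$. Thus $K=\{k_1,\dots,k_t\}$, listed in order of insertion, is a clique of $G$, so $t\le\omega$.
\item Put each vertex $v$ into the part indexed by the first $k_j$ (in insertion order) with $v\notin N_G(k_j)$; in particular $k_j$ lies in its own part.
\end{enumerate}
Every vertex $v\notin K$ has such a non-neighbour in $K$: otherwise, at the moment $v$ was scanned it was adjacent to all of the current $K$, and it would have been added. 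Hence the parts form a partition into $t\le\omega$ classes. An edge $uv$ fails to be separated exactly when $u$ and $v$ have the same first non-neighbour in $K$.

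Write $p_{uv}:=\Pr[u,v\text{ lie in different parts}]$ for $uv\in E(G)$. Then $\mathbb E\,W_{\mathcal P}(a)=\sum_e a_e p_e$. Since the distribution does not depend on $a$, Cauchy--Schwarz in the form $\bigl(\sum_e\sqrt{a_e}\bigr)^2\le\bigl(\sum_e a_ep_e\bigr)\bigl(\sum_e p_e^{-1}\bigr)$ reduces the theorem to the weight-free inequality
\[
  \sum_{uv\in E(G)}\frac1{p_{uv}}\;\le\;\frac12\Bigl(1-\frac1\omega\Bigr)n^2 .
\]
Note that this must be sharp for Tur\'an graphs, where every $p_{uv}=1$ and the right side equals the edge count. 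So the desired bound is a strengthening of Tur\'an's theorem in which each edge is charged $1/p_{uv}\ge1$ instead of $1$.

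To prove this I would look for a local inverse-probability estimate of the form $1/p_{uv}\le F(u,v)$. Here $F$ should depend only on the sizes of non-neighbourhoods such as $\bar N(u)=V\setminus N_G(u)$ (which contains $u$) and $\bar N(v)$, and on their intersection, in the spirit of the Caro--Wei estimate $\Pr[v\in K]\ge 1/|\bar N(v)|$. The natural event forcing separation is the following: some vertex $w$ of $\bar N(u)\cup\bar N(v)$ that is non-adjacent to exactly one of $u,v$ (for instance $u$ or $v$ itself) enters $K$ before any vertex of $\bar N(u)\cap\bar N(v)$ does. I would first try to bound the probability of this event from below by a ratio of the form
\[
  \frac{|\bar N(u)\,\triangle\,\bar N(v)|}{|\bar N(u)\cup\bar N(v)|},
\]
using the symmetry of the random ordering restricted to $\bar N(u)\cup\bar N(v)$. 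The final step is then to sum the reciprocal over edges and compare with $\tfrac12(1-1/\omega)n^2$. For this I would double count over vertices and finish with a Motzkin--Straus / Tur\'an-type inequality, applied to the vector of non-degree weights $(|\bar N(v)|)_v$ or a similar quadratic form.

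The main obstacle is the local estimate and its summation. The crude ``first vertex of the union'' bound ignores the requirement that $w$ must actually be accepted into $K$, which depends on vertices outside the two non-neighbourhoods. Any loss there destroys sharpness at $T_\omega(n)$. I expect to need to condition on the prefix of the ordering before the first vertex of $\bar N(u)\cup\bar N(v)$ is reached, and to argue that the greedy state at that time is no worse for separation than an unconditioned start. After that, the summation must exploit $\omega$ globally, presumably via the reduction to a Motzkin--Straus inequality for doubly nonnegative matrices mentioned in the abstract, rather than edge by edge.
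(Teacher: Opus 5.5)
Your greedy scan produces exactly the paper's Caro--Wei partition, with the same blocks. Your Cauchy--Schwarz reduction to $\sum_{uv\in E}1/p_{uv}\le\frac12(1-\frac1\omega)n^2$ is also the paper's. However, that harmonic bound is the entire content of the theorem, and the route you sketch for it fails.

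\textbf{The proposed local estimate is false.} The bound $p_{uv}\ge|\bar N(u)\mathbin{\triangle}\bar N(v)|/|\bar N(u)\cup\bar N(v)|$ does not hold. Take $V=\{u,v,w,x,a\}$ with edges $uv,ux,vx,xw,va$. Then $\bar N(u)=\{u,w,a\}$ and $\bar N(v)=\{v,w\}$, so your ratio is $3/4$. Now condition on the first vertex of the order:
\begin{itemize}
\item if it is $u$, $v$ or $a$, the edge is separated;
\item if it is $w$, the edge is not separated;
\item if it is the common neighbour $x$, the process continues on $N(x)=\{u,v,w\}$, where $w$ is isolated, so $uv$ is separated with probability $2/3$.
\end{itemize}
Hence $p_{uv}=(3+\frac23)/5=\frac{11}{15}<\frac34$.

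Choosing $x$ deletes $a$, which helped separation, and keeps $w$, which prevents it. So the conditioned greedy state is strictly worse for separation than the unconditioned start. This is exactly the monotonicity you planned to prove. Replacing the edge $xw$ by $xa$ leaves all your size parameters for $u,v$ unchanged but raises $p_{uv}$ to $4/5$. So any edge-by-edge bound in these sizes must absorb the worst placement of the common neighbourhood, and nothing in your plan explains why those worst cases would still sum to the Tur\'an bound.

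\textbf{The summation step is missing.} It is left unspecified. Appealing to the DNN Motzkin--Straus inequality would be circular, since in the paper \cref{lem:psd} is deduced from this theorem. In any case that inequality says nothing about separation probabilities of the process.

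\textbf{The missing idea: couple edges through the pivot probability.} Do not bound edges in isolation. Couple them at each vertex through the pivot probability $p_G(v)=\Pr[v\in K]$. The paper proves, for every $v$,
\[
\frac1{p_G(v)}+\sum_{u\in N_G(v)}\frac1{q_G(vu)}\le n,
\]
where $q_G$ is your $p_{uv}$. Summing over $v$ gives $\sum_v 1/p_G(v)+2\sum_e 1/q_G(e)\le n^2$. The clique number enters only through $\sum_v p_G(v)=\mathbb E|K|\le\omega$, together with Cauchy--Schwarz, which gives $\sum_v 1/p_G(v)\ge n^2/\omega$.

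\textbf{How the vertex inequality is proved.} It is proved by induction on $n$ via the first-pivot recurrences
\[
n\,p_G(v)=1+\sum_{x\in N(v)}p_{G[N(x)]}(v),\qquad n\,q_G(uv)=|N(u)\mathbin{\triangle}N(v)|+\sum_{x\in N(u)\cap N(v)}q_{G[N(x)]}(uv).
\]
The common-neighbour terms are not bounded below by unconditioned quantities; they are kept as exact subproblem probabilities. The induction hypothesis for $v$ inside each $G[N(u)]$, $u\in N(v)$, then trades the count $|N(u)\setminus(N(v)\cup\{v\})|$ in the symmetric-difference term for reciprocal probabilities of that subproblem.

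The resulting lower bounds for $n\,p_G(v)$ and $n\,q_G(vu)$ are row sums of a positive matrix on $\{0\}\cup N(v)$. This matrix has unit diagonal and $r_{ij}r_{ji}\ge1$; the key case is $(b^{-1}-1+c)(c^{-1}-1+b)\ge1$. For any such matrix, the reciprocal row sums add up to at most $1$ by AM--GM, which gives the vertex inequality after multiplying by $n$.
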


Taking $a_e \equiv1$ shows that every nonempty graph $G$ with clique
number $\omega$ has an $\omega$-partite subgraph with at least
$\frac{2m^2}{(1-\frac1\omega)n^2}$ edges.  This strengthens, up to an
$O_{\omega}(1)$ additive rounding error, the seminal result of
F\"uredi~\cite{Furedi2015}, which produces an $\omega$-partite
subgraph with at least $m-t$ edges when $m=t_\omega(n)-t$.  Much stronger bounds are known very close to the
extremal density: for $t=\alpha n^2$ with $\alpha$ sufficiently small
in terms of $\omega$, Balogh, Clemen, Lavrov, Lidick\'y, and
Pfender~\cite{BaloghEtAl2021} used stability arguments to show that
$G$ even has an $\omega$-partite subgraph with at least
$m-O_\omega(\alpha^{3/2})\,n^2$ edges, the exponent $3/2$ being
sharp; see also~\cite{HuEtAl2021} for related bounds obtained by
flag algebra computations.  Closer to our method, Kelly and
Postle~\cite{KellyPostle2024} strengthened the Caro--Wei bound and
deduced that every $K_{\omega+1}$-free graph with
$m\geq \frac12\bigl(1-\frac1\omega\bigr)n^2-\frac{n}{2\omega}+1$ is
already $\omega$-partite. All of these results concern the
near-extremal regime, whereas \cref{thm:partition} applies to every
graph and the random partition is generated by an explicit (and in
fact, polynomial-time) random greedy process.

\subsection{Overview of the proof}
The proof was found by successively reducing \cref{thm:main} to stronger
and increasingly combinatorial statements, along the chain
\begin{gather*}
  \text{Spectral Tur\'an}
  \;\Rightarrow\;
  \text{DNN Motzkin--Straus}
  \;\Rightarrow\;
  \text{Caro--Wei partition and weighted Tur\'an}\\
  \;\Rightarrow\;
  \text{Local harmonic estimate}.
\end{gather*}
We now walk through this chain and state the key ideas of the paper
where they arise.

\ppar{Relaxation via a DNN Motzkin--Straus inequality}
Recall that a symmetric matrix $M$ is \emph{doubly nonnegative} (DNN) if $M\succeq 0$ (positive semidefinite) and $M\geq 0$ (entrywise nonnegative).  The first
step follows the same relaxation used in \cite{LiuTangZhang2026}, the resolution of the square-energy conjecture of Elphick, Farber, Goldberg, and Wocjan~\cite[Conjecture~1]{ElphickFarberGoldbergWocjan2016}.  Write
$A=A^+-A^-$, where $A^+,A^-\succeq0$ are the positive and negative
parts of the spectral decomposition of $A$, so that $A^+A^-=0$ and
$s^{+}(G)=\normF{A^{+}}^2$.
By the Schur product theorem \cite[Theorem~7.5.3]{HornJohnson2013}, the
entrywise square $M=A^{+}\circ A^{+}$ is doubly nonnegative, and it satisfies
\[
  \one^{\top}M\one=\normF{A^{+}}^2=s^{+}(G),
  \qquad
  2\sum_{uv\in E(G)}\sqrt{M_{uv}}
  =2\sum_{uv\in E(G)}\lvert A^{+}_{uv}\rvert
  \ \ge\ \operatorname{tr}(AA^{+})=s^{+}(G).
\]
Applying the following DNN analogue of the Motzkin--Straus inequality
to this particular $M$ therefore yields
$s^{+}(G)\le\bigl(1-\frac1\omega\bigr)n\sqrt{s^{+}(G)}$,
which is \cref{thm:main}.

\begin{lemma}\label{lem:psd}
For every graph $G$ on $n\ge1$ vertices with clique number $\omega$ and
every doubly nonnegative matrix $M$ indexed by $V(G)$, we have
\begin{equation}\label{eq:psd-ms}
  2\sum_{uv\in E(G)} \sqrt{M_{uv}}
  \le \left(1-\frac1\omega\right)n \cdot \sqrt{\sum_{u, v \in V(G)} M_{uv}}.
\end{equation}
\end{lemma}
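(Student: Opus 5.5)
We deduce \cref{lem:psd} from \cref{thm:partition}, following the chain in the overview. Let $M$ be doubly nonnegative and set $a_{uv}:=M_{uv}\ge 0$ for $uv\in E(G)$. If $G$ is empty, the left side of \eqref{eq:psd-ms} vanishes and there is nothing to prove, so assume $G$ is nonempty.

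The key observation is that, for any partition $\mathcal P=(P_1,\dots,P_k)$ of $V(G)$ with $k\le\omega$, the separated mass is controlled by the total mass of $M$. Writing $x_i=\one_{P_i}$, positive semidefiniteness of $M$ gives, via Cauchy--Schwarz in the form $(\sum_i x_i^\top M x_i)\,k\ge(\sum_i x_i)^\top M(\sum_i x_i)$, the bound
\[
\sum_i x_i^\top M x_i\ \ge\ \frac1k\,\one^\top M\one .
\]
The cross terms then satisfy
\[
\sum_{i\ne j} x_i^\top M x_j=\one^\top M\one-\sum_i x_i^\top M x_i\le\Bigl(1-\frac1\omega\Bigr)\one^\top M\one .
\]
Since $M\ge 0$ entrywise, restricting to edges gives
\[
2W_{\mathcal P}(a)\le \sum_{i\ne j}x_i^\top M x_j\le \Bigl(1-\frac1\omega\Bigr)\one^\top M\one .
\]
This is the only place where the DNN hypothesis enters.

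Now average over $\mathcal P\sim\mathcal D_{\mathcal P}$ and use \cref{thm:partition}:
\[
\frac{4}{(1-\frac1\omega)n^2}\Bigl(\sum_{e}\sqrt{M_e}\Bigr)^2\le 2\,\mathbb E\,W_{\mathcal P}(a)\le\Bigl(1-\frac1\omega\Bigr)\one^\top M\one .
\]
Rearranging and taking square roots gives
\[
2\sum_{e}\sqrt{M_e}\le\Bigl(1-\frac1\omega\Bigr)n\sqrt{\one^\top M\one},
\]
which is \eqref{eq:psd-ms}. The reduction is therefore routine.

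\textbf{Main obstacle: proving \cref{thm:partition}.} The plan is to let $\mathcal D_{\mathcal P}$ be produced by a random greedy (Caro--Wei type) process on the complement $\overline G$. Take a uniformly random ordering of $V$ and repeatedly peel off a maximal independent set of $\overline G$, that is, a clique of $G$ chosen greedily; each vertex is assigned by position within the successive cliques. Any clique in $G$ has at most $\omega$ vertices, so this yields at most $\omega$ classes; equivalently, the process is a greedy colouring of $\overline G$-cliques into $\omega$ parts.

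For each edge $uv$ we need a lower bound $p_{uv}$ on $\Pr[u,v \text{ separated}]$. By Cauchy--Schwarz,
\[
\sum_e a_e p_e\ \ge\ \frac{\bigl(\sum_e\sqrt{a_e}\bigr)^2}{\sum_e 1/p_e},
\]
so it suffices to show
\[
\sum_{uv\in E}\frac1{p_{uv}}\le\frac{(1-\frac1\omega)n^2}{2}.
\]
This is a harmonic-type inequality, which is the ``local harmonic estimate'' of the overview. I would first try to prove a local inverse-probability bound of the form $1/p_{uv}\le f(u)+f(v)$, with $f$ built from degrees in $\overline G$ (Caro--Wei style, e.g.\ $1/p_{uv}$ controlled by $n/(d_{\overline G}(u)+1)$-type quantities). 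I would then sum over edges and invoke Tur\'an/Motzkin--Straus to bound the total by $(1-\frac1\omega)n^2/2$. Finding the right distribution and the right local estimate is where I expect essentially all of the difficulty to lie.
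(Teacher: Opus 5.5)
Your proof is correct and is essentially the paper's argument. You bound each outcome pointwise by $2W_{\mathcal P}(M)\le\bigl(1-\frac1k\bigr)\one^{\top}M\one\le\bigl(1-\frac1\omega\bigr)\one^{\top}M\one$, using Cauchy--Schwarz on the Gram vectors $Bx_i$ together with entrywise nonnegativity, and then combine this with the lower bound on $\mathbb{E}\,W_{\mathcal P}$ from \cref{thm:partition}. The closing sketch about proving \cref{thm:partition} is not needed for this lemma, since the paper proves that theorem separately, and it differs from the paper's route: the paper uses the pivot-block Caro--Wei partition and the vertex-local inequality $\frac1{p_G(v)}+\sum_{u\in N_G(v)}\frac1{q_G(vu)}\le n$ summed over $v$, rather than your clique-peeling process and edge-splitting bound $f(u)+f(v)$.
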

Taking $M$ to be the all-ones matrix, \eqref{eq:psd-ms} reads
$2m\le\bigl(1-\frac1\omega\bigr)n^2$, which is Tur\'an's theorem.
Thus \cref{lem:psd} is a weighted strengthening of Tur\'an's theorem,
and we look for a proof of Tur\'an's theorem that is robust under
weights. 

\ppar{The Caro--Wei process}
One of the simplest proofs of Tur\'an's theorem is the random-order
greedy argument underlying the independence bounds of
Caro~\cite{Caro1979} and Wei~\cite{Wei1981}. We first recall the
process, run on the complement so that it grows a clique rather than
an independent set, and then identify the random block partition it
produces.

The process itself has
been studied in the theoretical computer science community: its parallel round
complexity was studied in \cite{BlellochFinemanShun2012} and settled
in \cite{FischerNoever2019}, and the independent set it produces on
sparse graphs was analyzed via local limits
in~\cite{KrivelevichEtAl2024}. The block partition defined below is exactly the clustering that the random-pivot algorithm of Ailon,
Charikar, and Newman~\cite{AilonCharikarNewman2008} for correlation
clustering produces on the complement graph. 

\begin{definition}[Caro--Wei Process]
\label{defn:caro-wei-process}
Given a graph $G$, choose a uniformly random ordering of
$V(G)$.  Starting with $S_0=V(G)$, repeat the following operation.
Let $x_i$ be the first vertex of $S_i$ in the chosen order, which
we call a \emph{pivot}.  Set
\[
  S_{i+1}=S_i\cap N_G(x_i),
  \qquad
  B_i=S_i\setminus S_{i+1},
\]
and stop when $S_i=\varnothing$. The sets $B_0,B_1,\ldots$ are the \emph{blocks} of the process. In every outcome the blocks form a partition
$\mathcal P=\{B_0,B_1,\ldots\}$ of the vertex set $V(G)$. 

The pivots $x_0,x_1,\ldots$ form
a clique, since every later pivot belongs to the neighborhood of
every earlier pivot; in particular there are at most $\omega(G)$
pivots in every outcome. Since the number of blocks is equal to the number of pivots, we conclude that $\mathcal P$ has at most $\omega(G)$ parts.

For $v\in V(G)$ and $uv\in E(G)$, define
\[
  p_G(v)=\Pr(v\text{ is selected as a pivot}),
  \qquad
  q_G(uv)=\Pr(u\text{ and }v\text{ lie in different blocks}).
\]
\end{definition}
In particular, $v$ is selected as a pivot whenever it precedes all of
its non-neighbors in the chosen order, so
$p_G(v)\ge\frac{1}{n-d_G(v)}$.  Taking expectations yields the
Caro--Wei inequality $\omega(G) \geq \mathbb{E}[\#\mathrm{pivots}]\ge\sum_{v}\frac1{n-d_G(v)}$, which
implies Tur\'an's theorem by Cauchy--Schwarz; see
\cite{AlonSpencer2016}.

\ppar{Reduction to a weighted Tur\'an theorem}
Write $W_{\mathcal P}(M)$ for the separated mass with weights
$a_{uv}=M_{uv}$ given by the entries of a matrix $M$.  Since every
outcome of the Caro--Wei partition $\mathcal P$ has $r\le\omega$
parts, for doubly nonnegative $M$ a Cauchy--Schwarz argument (carried
out in \cref{sec:reduction}) gives the deterministic upper bound
\[
  W_{\mathcal P}(M)\le\frac12\Bigl(1-\frac1r\Bigr)\one^{\top}M\one
  \le\frac12\Bigl(1-\frac1\omega\Bigr)\one^{\top}M\one.
\]
Comparing this bound with \eqref{eq:psd-ms} shows that \cref{lem:psd} follows from \cref{thm:partition}, where $\mathcal D_{\mathcal P}$ is taken to
be the distribution of the Caro--Wei partition.

\ppar{A (local) harmonic bound for the Caro--Wei process}
The Caro--Wei process separates each edge $e$ with probability
$q_G(e)$, so the expected separated mass of its partition is
$\mathbb{E}\,W_{\mathcal P}(a)=\sum_ea_e\,q_G(e)$.  The weighted
Cauchy--Schwarz inequality
\[
  \Bigl(\sum_{e\in E(G)}\sqrt{a_e}\Bigr)^{\!2}
  \le\Bigl(\sum_{e\in E(G)}a_e\,q_G(e)\Bigr)
    \Bigl(\sum_{e\in E(G)}\frac1{q_G(e)}\Bigr)
\]
 shows that the single, weight-free \emph{harmonic bound}
\begin{equation}
    \label{eq:harmonic}
  \sum_{e\in E(G)}\frac1{q_G(e)}
  \;\le\;
  \frac{\omega-1}{2\omega}\,n^2
\end{equation}
implies \cref{thm:partition}. Remarkably, the following stronger local harmonic inequality holds.
It yields \eqref{eq:harmonic} upon summing over all vertices, using
$\omega\geq\mathbb{E}[\#\mathrm{pivots}]=\sum_v p_G(v)$ and
Cauchy--Schwarz; see \cref{sec:reduction}.

\begin{lemma}\label{lem:local}
For every graph $G$ on $n\ge1$ vertices and every vertex $v\in V(G)$, we have
\begin{equation}\label{eq:local}
  \frac1{p_G(v)}
  +\sum_{u\in N_G(v)}\frac1{q_G(vu)}
  \le n.
\end{equation}
\end{lemma}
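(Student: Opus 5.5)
We argue by strong induction on $n$, conditioning on the first vertex $x$ of the uniformly random ordering. This $x$ is the first pivot, and on the event that $x$ is first, the remaining process is exactly the Caro--Wei process on $G_x:=G[N_G(x)]$, run with a uniformly random ordering. Let $v$ be fixed and $u\in N_G(v)$. The recursions read
\[
  p_G(v)=\frac1n\Bigl(1+\sum_{x\in N_G(v)}p_{G_x}(v)\Bigr),
\]
\[
  q_G(vu)=\frac1n\Bigl(2+\bigl|N_G(u)\,\triangle\,N_G(v)\setminus\{u,v\}\bigr|+\sum_{x\in N_G(u)\cap N_G(v)}q_{G_x}(vu)\Bigr).
\]
The first uses that $v$ is deleted unless $x=v$ or $x\in N_G(v)$. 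The second uses that $u,v$ are separated if $x\in\{u,v\}$ or $x$ is adjacent to exactly one of them, are never separated if $x$ is adjacent to neither, and otherwise both survive into $G_x$. The base case $n=1$ is $1/p=1\le 1$. For every $x\in N_G(v)$ the induction hypothesis in $G_x$ gives
\[
  \frac1{p_{G_x}(v)}+\sum_{u\in N_G(v)\cap N_G(x)}\frac1{q_{G_x}(vu)}\le d_G(x).
\]

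To invert the recursions we use the weighted Cauchy--Schwarz (convexity) inequality $\bigl(\sum_i c_i\bigr)^{-1}\le\sum_i\lambda_i^2/c_i$, valid for $c_i>0$ and $\lambda_i\ge0$ with $\sum_i\lambda_i=1$. We apply it to the terms in the recursion for $p_G(v)$ and for each $q_G(vu)$, with weights depending on $x$. The natural choice is to give the term indexed by $x\in N_G(v)$ a weight $\lambda_x$ that does not depend on which quantity ($p$ or which $u$) we are bounding. The ``free'' terms receive the remaining weight: the $1$ for $p$, and for $q$ the $2$ together with the vertices of the symmetric difference. Multiplying out, the contributions indexed by a fixed $x\in N_G(v)$ assemble exactly into $n\lambda_x^2$ times the left-hand side of the induction hypothesis in $G_x$, hence are at most $n\lambda_x^2 d_G(x)$.

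It then remains to verify that the free contributions plus $\sum_{x\in N_G(v)} n\lambda_x^2 d_G(x)$ total at most $n$. This is a purely numerical inequality in the quantities $d_G(x)$, $|N_G(u)\cap N_G(v)|$ and $|N_G(u)\triangle N_G(v)|$. A first candidate is $\lambda_x\propto 1/d_G(x)$ suitably normalised, mimicking the Caro--Wei bound $p\ge 1/(n-d)$; it should be compared against the trivial lower bounds $q_G(vu)\ge(2+|N_G(u)\triangle N_G(v)\setminus\{u,v\}|)/n$ and $p_G(v)\ge 1/(n-d_G(v))$.

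The main obstacle is this weight choice. The weights must be uniform enough in $u$ that the $x$-terms recombine into the induction hypothesis. They must also use the slack from the symmetric-difference vertices, which are precisely the vertices that make separation likely, to absorb the non-neighbours of $v$ that cost $p_G(v)$ mass. If a uniform choice of $\lambda_x$ fails, I would try to strengthen the induction hypothesis instead. The strengthened form would bound a weighted version $\frac{\alpha}{p}+\sum_u\frac{\beta_u}{q}$ for a suitable family of coefficients closed under passing to $G_x$. Failing that, I would replace the first-vertex conditioning by conditioning on the first vertex of $\{v\}\cup N_G(v)$ together with the event that $v$ has survived so far, which removes the ``adjacent to neither'' zero terms from the recursion for $q$.
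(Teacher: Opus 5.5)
\textbf{There is a genuine gap.} Your framework matches the paper's: strong induction, the first-vertex recursions, and the induction hypothesis in $G_x$ for $x\in N_G(v)$. But the step you call the main obstacle is not just unresolved. The shared-weight Cauchy--Schwarz scheme provably cannot close it.

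\textbf{Counterexample to the scheme.} Take the paw: a triangle $v,1,2$ plus a pendant vertex $w$ at $2$, so $n=4$. The recursions give $p_G(v)=q_G(v1)=\tfrac23$ and $q_G(v2)=1$. Hence the lemma holds with \emph{equality}: $\tfrac32+\tfrac32+1=4$. Put $a=\lambda_1$, $b=\lambda_2$. The free masses are $|N_G(v)\mathbin{\triangle}N_G(1)|=2$ and $|N_G(v)\mathbin{\triangle}N_G(2)|=3$, and $d_G(1)=2$, $d_G(2)=3$. Your scheme then yields
\[
  \frac1{p_G(v)}+\frac1{q_G(v1)}+\frac1{q_G(v2)}
  \le 4\Bigl[(1-a-b)^2+\frac{(1-b)^2}{2}+\frac{(1-a)^2}{3}+2a^2+3b^2\Bigr].
\]
Over all real $a,b$ the bracket has minimum $\tfrac{85}{84}$, attained at $a=\tfrac9{28}$, $b=\tfrac{11}{42}$. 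So the best bound you can obtain is $\tfrac{85}{21}>4$.

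\textbf{Why it fails.} The loss is not in the induction hypothesis, which is tight in both $G_1=K_2$ and $G_2=K_2\cup K_1$. It is intrinsic to sharing weights. Equality in Cauchy--Schwarz for the $p$-recursion needs $\lambda_1=p_{G_1}(v)/(n\,p_G(v))=\tfrac38$. For the $q_G(v2)$-recursion it needs $\lambda_1=q_{G_1}(v2)/(n\,q_G(v2))=\tfrac14$. Since the lemma is tight here, no choice of $\lambda$ works. Your fallbacks (an unspecified weighted hypothesis, a different conditioning) are not developed, so the proof is incomplete.

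\textbf{The missing idea.} The paper uses the hypothesis in $G_u$ differently. It does not absorb the $x$-indexed terms. Instead it gives a lower bound on $t_u=|N_G(u)\setminus(N_G(v)\cup\{v\})|$. This quantity sits inside the free mass $|N_G(v)\mathbin{\triangle}N_G(u)|=d+1+t_u-d_F(u)$ of $n\,q_G(vu)$, where $d=d_G(v)$ and $F=G[N_G(v)]$. Substituting gives
\[
  n\,q_G(vu)\ge d+\frac1{p_{G_u}(v)}-d_F(u)
  +\sum_{x\in N_F(u)}\Bigl(\frac1{q_{G_u}(vx)}-1+q_{G_x}(vu)\Bigr).
\]
So $1/p_{G_u}(v)$ lands in the row of $u$, opposite $p_{G_u}(v)$ in $n\,p_G(v)=1+\sum_x p_{G_x}(v)$.

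\textbf{How the paper closes.} The quantities $n\,p_G(v)$ and $n\,q_G(vu)$ then dominate the row sums $s_i$ of a matrix with unit diagonal. Its opposite entries have product at least $1$, by $(b^{-1}-1+c)(c^{-1}-1+b)\ge1$. With $y_i=1/s_i$ and $Y=\sum_i y_i$, AM--GM gives $Y=\sum_i s_iy_i^2\ge Y^2$, hence $\sum_i 1/s_i\le1$. On the paw the row sums are $\tfrac83,\tfrac83,4$, whose reciprocals sum to exactly $1$. So this argument loses nothing exactly where yours must.
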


Every term on the left-hand side is finite: $p_G(v)\ge\frac1{n-d_G(v)}$
as noted above, and $q_G(vu)\ge\frac1n$ because $u$ and $v$ are
separated whenever $u$ comes first in the order.
As a sanity check, we explicitly compute this sum for a complete multipartite
graph $G$: in every outcome of the Caro--Wei process, the blocks are exactly the parts of the graph, so every edge is separated. Thus $q_G\equiv1$, and a vertex $v$ in a part of size $s$ is a pivot
precisely when it comes first within its part, so $p_G(v)=1/s$.  Hence equality holds in \eqref{eq:local}:
\[
  \frac1{p_G(v)}+\sum_{u\in N_G(v)}\frac1{q_G(vu)}
  = s+(n-s)=n.
\]
\ppar{Proof idea of the local estimate}
The main strategy is to average the induction hypothesis on each neighborhood. Conditioning on the first vertex of the random order yields the
recurrences (\cref{lem:recurrences}), where $G_x=G[N_G(x)]$:
\[
  n\,p_G(v)=1+\sum_{x\in N_G(v)}p_{G_x}(v),
  \qquad
  n\,q_G(uv)=|N_G(u)\mathbin{\triangle}N_G(v)|
   +\sum_{x\in N_G(u)\cap N_G(v)}q_{G_x}(uv).
\]
Applying the induction
hypothesis to $v$ inside each $G_x$ with $x\in N_G(v)$ and
substituting into the recurrences shows that $n\,p_G(v)$ and the
$n\,q_G(vu)$ dominate the row sums of a positive matrix indexed by
$\{0\}\cup N_G(v)$ with unit diagonal in which every pair of opposite
entries has product at least one; the crucial case of the latter is
the two-variable inequality $(b^{-1}-1+c)(c^{-1}-1+b)\ge1$ for
$b,c\in(0,1]$.  For any such matrix, an AM--GM argument
(\cref{lem:rows}) shows that the reciprocals of the row sums sum to
at most $1$, which is \eqref{eq:local} after multiplying by $n$.

\subsection{Organization}
\cref{sec:local} proves the local estimate: we establish the two
first-pivot recurrences together with an elementary inequality on
reciprocal row sums, and combine them into an inductive proof of
\cref{lem:local} (restated there as \cref{thm:local}).
\cref{sec:reduction} then carries out the chain of reductions
described above: using \cref{lem:local}, we prove
\cref{thm:partition}, then \cref{lem:psd}, and with them
\cref{thm:main}.

\ppar{Formal verification}
The proof of \cref{thm:main} has been formally verified in Lean~4 using Mathlib.  The
complete formalization, together with documentation, reproducible
build instructions, and a kernel-axiom audit, is available at
\href{https://github.com/ShengtongZhang-alt/SqOmega}
{\texttt{github.com/ShengtongZhang-alt/SqOmega}}.

\section{Proof of the local estimate}\label{sec:local}

In this section we prove \cref{lem:local}.  We begin with the two
first-pivot recurrences stated in the overview, then establish an
elementary inequality on reciprocal row sums, and finally combine
them in an induction on the number of vertices.

We begin with the following first-pivot recurrences.

\begin{lemma}\label{lem:recurrences}
If $n=|V(G)|$, then for every $v\in V(G)$,
\begin{equation}\label{eq:pivot-rec}
  n\,p_G(v)
  =1+\sum_{x\in N_G(v)}p_{G[N_G(x)]}(v),
\end{equation}
and for every $uv\in E(G)$,
\begin{equation}\label{eq:sep-rec}
  n\,q_G(uv)
  =|N_G(u)\mathbin{\triangle}N_G(v)|
   +\sum_{x\in N_G(u)\cap N_G(v)}q_{G[N_G(x)]}(uv).
\end{equation}
\end{lemma}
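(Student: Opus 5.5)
We condition on the first vertex $x=x_0$ of the uniformly random ordering; it is uniform over $V(G)$, which gives the factor $n$ on the left of both identities. The key structural fact is this: given $x_0=x$, the rest of the process on $S_1=N_G(x)$ is exactly the Caro--Wei process run on $G_x=G[N_G(x)]$. The ordering induced on $N_G(x)$ by a uniform ordering of $V(G)$, conditioned on $x$ coming first, is a uniform ordering of $N_G(x)$. Moreover, subsequent pivots and blocks depend only on the relative order of vertices in the current set $S_i\subseteq N_G(x)$. So the conditional law of the pivots $x_1,x_2,\dots$ and blocks $B_1,B_2,\dots$ coincides with the law of the pivots and blocks of the process on $G_x$. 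I would state this as a short preliminary observation, justified by the fact that restricting a uniform random permutation to a subset gives a uniform random permutation of that subset, independent of which element of the complement comes first.

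For \eqref{eq:pivot-rec}, I would split according to $x$.
\begin{itemize}
\item If $x=v$, then $v$ is a pivot; this contributes $1$.
\item If $x\notin N_G(v)\cup\{v\}$, then $v\in B_0$, so $v$ is never a pivot; this contributes $0$.
\item If $x\in N_G(v)$, then $v\in S_1=N_G(x)$. By the observation, $v$ is a later pivot with conditional probability $p_{G_x}(v)$.
\end{itemize}
Summing $\Pr(x_0=x)\cdot\Pr(\cdot\mid x_0=x)$ with $\Pr(x_0=x)=1/n$ and multiplying by $n$ gives \eqref{eq:pivot-rec}.

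For \eqref{eq:sep-rec}, fix an edge $uv$ and split again on $x$.
\begin{itemize}
\item If $x\in\{u,v\}$, say $x=u$, then $u\in B_0$ and $v\in N_G(u)=S_1$, so they are separated. These two choices are counted in the symmetric difference, since $u\in N_G(v)\setminus N_G(u)$ and $v\in N_G(u)\setminus N_G(v)$.
\item If $x\in N_G(u)\triangle N_G(v)$ with $x\notin\{u,v\}$, then exactly one of $u,v$ lies in $S_1$ and the other lies in $B_0$, so they are separated with probability $1$.
\item If $x$ is adjacent to neither $u$ nor $v$ and $x\notin\{u,v\}$, then both lie in $B_0$ and they are not separated.
\item If $x\in N_G(u)\cap N_G(v)$, then both lie in $S_1$. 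By the observation, they end up in different blocks with conditional probability $q_{G_x}(uv)$; note that $uv$ is still an edge of $G_x$.
\end{itemize}
Together, the first two cases range over exactly $N_G(u)\triangle N_G(v)$, since the sets $\{u,v\}$ and $N_G(u)\triangle N_G(v)$ overlap precisely in $u$ and $v$ because $uv\in E(G)$. Summing and multiplying by $n$ gives \eqref{eq:sep-rec}.

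The only step needing care is the conditional-uniformity claim: the process restricted to $S_1$ must be a faithful copy of the process on $G_x$, with the same pivot sets, neighborhoods intersected within $N_G(x)$, and blocks $B_i$ for $i\ge1$ matching those computed in $G_x$. This is immediate once we note that $S_{i+1}=S_i\cap N_G(x_i)=S_i\cap N_{G_x}(x_i)$ for $S_i\subseteq N_G(x)$. I expect no real obstacle beyond this bookkeeping. One small point: in the intersection case, $u$ and $v$ end up in the same block of $G$'s process exactly when they do in $G_x$'s process, because $B_0$ contains neither of them.
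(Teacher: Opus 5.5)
Your proposal is correct and follows essentially the same argument as the paper. You condition on the uniformly distributed first pivot $x_0$, note that the remaining process is the Caro--Wei process on $G[N_G(x_0)]$ under a uniform induced order, and split into the same cases, with $u,v$ themselves absorbed into $N_G(u)\mathbin{\triangle}N_G(v)$ because neighborhoods are open. Your check that $S_{i+1}=S_i\cap N_{G_x}(x_i)$ whenever $S_i\subseteq N_G(x)$ only spells out a step the paper states in one line.
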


\begin{proof}
  The first pivot $x_0$ is the first vertex of the random order, so it
is uniformly distributed on $V(G)$.  Conditioned on $x_0=x$, the
surviving set is $S_1=N_G(x)$, the order restricted to $N_G(x)$ is
again uniform, and the remaining steps run the Caro--Wei process on
$G[N_G(x)]$.  Hence a vertex of $N_G(x)$ becomes a pivot, or a pair
of vertices of $N_G(x)$ is separated, with the same conditional
probabilities as in the process on $G[N_G(x)]$.

For \eqref{eq:pivot-rec}, we condition on $x_0$.  If $x_0=v$, then
$v$ is a pivot.  If $x_0=x$ for a neighbor $x$ of $v$, then $v$
survives, and it becomes a pivot with conditional probability
$p_{G[N_G(x)]}(v)$.  If $x_0$ is a non-neighbor of $v$, then $v$ is
deleted into the block $B_0$ and is never a pivot.  Averaging the
three cases, each value of $x_0$ carrying weight $\frac1n$, yields
\eqref{eq:pivot-rec}.

For \eqref{eq:sep-rec}, note first that, as neighborhoods are open
and $uv\in E(G)$, the endpoints $u$ and $v$ themselves lie in
$N_G(u)\mathbin{\triangle}N_G(v)$.  If
$x_0\in N_G(u)\mathbin{\triangle}N_G(v)$, then exactly one of the
endpoints survives to $S_1$, while the other is deleted into $B_0$,
so the pair is separated.  If $x_0\in N_G(u)\cap N_G(v)$, then both
endpoints survive, and they are separated with conditional
probability $q_{G[N_G(x_0)]}(uv)$.  In every remaining case both
endpoints are deleted into $B_0$, and the pair is never separated.
Averaging as before yields \eqref{eq:sep-rec}.
\end{proof}

We also need the following elementary inequality on reciprocal row
sums.

\begin{lemma}\label{lem:rows}
Let $R=(r_{ij})_{i,j\in I}$ be a matrix with strictly positive entries,
indexed by a finite nonempty set $I$. Suppose that $r_{ii}=1$ and
$r_{ij}r_{ji}\ge1$ whenever $i\ne j$.  Then
\[
  \sum_{i\in I}\frac{1}{\sum_{j\in I}r_{ij}}\le1.
\]
\end{lemma}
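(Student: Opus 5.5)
Write $\rho_i=\sum_{j\in I}r_{ij}$ for the $i$-th row sum. The plan is an AM--GM/weighting argument: for each $i$, bound $1/\rho_i$ by a convex combination of quantities that, summed over $i$, telescope pairwise to at most $1$. The natural candidate is the Cauchy--Schwarz estimate
\[
  \rho_i\cdot\sum_{j\in I} \frac{w_j^2}{r_{ij}}\;\ge\;\Bigl(\sum_{j} w_j\Bigr)^{2}
\]
for positive weights $w$. This gives $1/\rho_i\le \sum_j w_j^2 r_{ij}^{-1}/(\sum_j w_j)^2$, but summing over $i$ produces $\sum_{i,j}w_j^2/r_{ij}$, which need not be small. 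So instead I will use a direct pairwise argument.

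\textbf{Main step.} Let $x_i=1/\rho_i$, and suppose for contradiction that $\sum_i x_i>1$. For each ordered pair $i\ne j$ the hypothesis $r_{ij}r_{ji}\ge1$ together with AM--GM gives
\[
  x_i r_{ij}+x_j r_{ji}\ \ge\ 2\sqrt{x_ix_j\,r_{ij}r_{ji}}\ \ge\ 2\sqrt{x_ix_j}.
\]
Since $x_i\rho_i=1$ for every $i$ and $r_{ii}=1$,
\[
  |I|=\sum_i x_i\rho_i=\sum_i x_i+\sum_{\{i,j\},\,i\ne j}\bigl(x_ir_{ij}+x_jr_{ji}\bigr)
  \ \ge\ \sum_i x_i+2\sum_{\{i,j\}}\sqrt{x_ix_j}.
\]
That is a lower bound on $|I|$ by $\bigl(\sum_i\sqrt{x_i}\bigr)^2$, which points in the wrong direction for concluding $\sum_i x_i\le1$.

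\textbf{Fix by rescaling.} The repair is to weight the rows differently. Multiplying row $i$ by $\lambda_i>0$ and column $i$ by $\lambda_i^{-1}$ preserves the hypotheses, since the diagonal stays $1$ and the products $r_{ij}r_{ji}$ are unchanged, but it changes the row sums. So I will take $t_i=x_i/\sum_k x_k$ as a probability vector and estimate $1=\sum_i t_i\cdot(\rho_i x_i)$ more carefully. Write
\[
  \sum_i x_i\rho_i\cdot\frac{x_i}{S},\qquad S=\sum_k x_k,
\]
and bound $\sum_{i,j}x_i^2r_{ij}$ from below by pairing $(i,j)$ with $(j,i)$ via AM--GM:
\[
  x_i^2r_{ij}+x_j^2r_{ji}\ \ge\ 2x_ix_j\sqrt{r_{ij}r_{ji}}\ \ge\ 2x_ix_j.
\]
Then
\[
  S=\sum_i x_i=\sum_i x_i^2\rho_i=\sum_{i,j}x_i^2r_{ij}\ \ge\ \sum_i x_i^2+\sum_{i\ne j}x_ix_j=S^2,
\]
so $S\le1$, which is the claim.

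The only real obstacle is choosing the right weighting: multiplying the identity $x_i\rho_i=1$ by $x_i$ rather than by $1$ is what makes the pairwise AM--GM terms combine into the perfect square $S^2$. Everything else is routine, and positivity of all entries guarantees that the $x_i$ are well defined and positive.
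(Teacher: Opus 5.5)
Your final argument is correct and is essentially the paper's proof: you multiply $x_i\rho_i=1$ by $x_i$ to get $S=\sum_{i,j}x_i^2 r_{ij}$, then pair $(i,j)$ with $(j,i)$ via AM--GM to obtain $S\ge S^2$, hence $S\le 1$. The earlier unweighted attempt, the unused contradiction hypothesis, and the row/column rescaling remark play no role and can be deleted.
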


\begin{proof}
Write $c_i=\sum_j r_{ij}$, $y_i=c_i^{-1}$, and $Y=\sum_i y_i$.
Then
\begin{align*}
Y=\sum_i c_i y_i^2=\sum_i y_i^2
    +\sum_{i<j}\bigl(r_{ij}y_i^2+r_{ji}y_j^2\bigr)\ge \sum_i y_i^2+2\sum_{i<j}y_i y_j
   =Y^2,
\end{align*}
where the inequality is AM--GM and $r_{ij}r_{ji}\ge1$.
Thus $Y\le1$.
\end{proof}

We now restate and prove the local harmonic inequality from
\cref{lem:local}.

\begin{theorem}\label{thm:local}
For every graph $G$ on $n\ge1$ vertices and every vertex $v\in V(G)$, we have
\begin{equation}\label{eq:local-restate}
  \frac1{p_G(v)}
  +\sum_{u\in N_G(v)}\frac1{q_G(vu)}
  \le n.
\end{equation}
\end{theorem}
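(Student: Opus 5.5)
We argue by strong induction on $n$, following the route sketched in the overview. For $n=1$ the vertex $v$ is always the pivot, so $p_G(v)=1$, $N_G(v)=\varnothing$, and \eqref{eq:local-restate} reads $1\le 1$.

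For the inductive step, fix $v$ and let $I=\{0\}\cup N_G(v)$. For each $x\in N_G(v)$ set $G_x=G[N_G(x)]$. This graph has fewer than $n$ vertices and contains $v$, so the induction hypothesis applies to $v$ in $G_x$. Write $b_x=p_{G_x}(v)$ and $c_{xu}=q_{G_x}(vu)$ for $u\in N_G(v)\cap N_G(x)$. Then
\[
\frac1{b_x}+\sum_{u\in N_G(v)\cap N_G(x)}\frac1{c_{xu}}\le |N_G(x)|.
\]

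Next we build a matrix $R$ indexed by $I$ whose row sums are at most $n\,p_G(v)$ for row $0$ and at most $n\,q_G(vu)$ for row $u$. By \cref{lem:recurrences}, \eqref{eq:pivot-rec} gives $n\,p_G(v)=1+\sum_x b_x$. So we set $r_{00}=1$ and $r_{0x}=b_x$.

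For row $u$, \eqref{eq:sep-rec} gives
\[
n\,q_G(vu)=|N(u)\triangle N(v)|+\sum_{x\in N(u)\cap N(v)}c_{xu}.
\]
We distribute this as follows:
\begin{itemize}
\item[(i)] the diagonal entry $r_{uu}=1$ uses the vertex $u\in N(u)\triangle N(v)$;
\item[(ii)] the entry $r_{u0}$ takes the contribution of $v$ itself (also in the symmetric difference) together with the vertices of $N(u)\setminus N[v]$;
\item[(iii)] for $x\in N(v)$, $x\ne u$, we set $r_{ux}=c_{xu}$ if $x\sim u$, and $r_{ux}=1$ if $x\in N(v)\setminus N(u)$, since such $x$ lies in the symmetric difference.
\end{itemize}

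The real difficulty is choosing $r_{u0}$ and the off-diagonal entries so that every opposite pair has product at least $1$. For pairs $x,u\in N(v)$ that are adjacent, we need $c_{xu}c_{ux}\ge1$, which fails in general. So we must redistribute the slack coming from the inductive hypothesis.

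The natural plan is not to use $b_x$ raw. Instead we use the inductive bound to replace $b_x$ by something like $\bigl(|N(x)|-\sum_u c_{xu}^{-1}\bigr)^{-1}$. Alternatively we rescale entries so that the entry $r_{0x}$ and the entries $r_{xu}$ interact.

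The overview indicates that the crucial case is the two-variable inequality $(b^{-1}-1+c)(c^{-1}-1+b)\ge1$ for $b,c\in(0,1]$. This inequality holds by expanding: with $b^{-1}-1=\beta\ge0$ and $c^{-1}-1=\gamma\ge0$, the product is at least $bc\cdot b^{-1}c^{-1}=1$ after rearrangement. This suggests entries of the form
\[
r_{ux}=c_{xu}^{-1}\ \text{-type quantities}\;+\;\text{(slack)}.
\]
The pairing $r_{ux}r_{xu}=(b^{-1}-1+c)(c^{-1}-1+b)$ would then be exactly of that form, with $b,c$ being separation probabilities $q_{G_x}(xu)$-type terms paired across the two directions.

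The plan is to find the precise assignment by matching against the equality case of complete multipartite graphs. This assignment, and verifying that each row is dominated by the corresponding recurrence (using the induction hypothesis summed appropriately over $x$), is the main obstacle.

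Once $R$ is constructed with positive entries, unit diagonal, and $r_{ij}r_{ji}\ge1$, \cref{lem:rows} finishes the proof. It gives
\[
\frac1{n\,p_G(v)}+\sum_{u}\frac1{n\,q_G(vu)}\le\sum_{i\in I}\frac1{\sum_j r_{ij}}\le1,
\]
and multiplying by $n$ yields \eqref{eq:local-restate}.
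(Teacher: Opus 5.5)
Your framework matches the paper's, but there is a genuine gap: you never construct the matrix $R$, which is the core of the proof. The parts you do have are right: induction, the recurrences of \cref{lem:recurrences}, \cref{lem:rows} on a matrix indexed by $\{0\}\cup N_G(v)$, and row $0$ given by $r_{0x}=b_x$. You yourself call the construction of $R$ ``the main obstacle.'' Your raw assignment has two kinds of entries. One is $r_{u0}=1+t_u$, where $t_u=|N_G(u)\setminus(N_G(v)\cup\{v\})|$. The other is $r_{ux}=c_{xu}$ for $x\in N_G(u)\cap N_G(v)$. With these, $r_{ux}r_{xu}=c_{xu}c_{ux}<1$ in general, as you note, so \cref{lem:rows} does not apply. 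The remedies you float do not lead to a fix: shrinking $b_x$ in row $0$, or using the induction hypothesis ``summed over $x$.''

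The missing idea is this. Row $0$ stays exactly as the recurrence gives it. The slack for row $u$ comes from the single induction hypothesis for $v$ in $G_u$. Write $d_u=|N_G(u)\cap N_G(v)|$, so $|N_G(u)|=1+d_u+t_u$. Then that hypothesis rearranges to
\[
  1+t_u\ \ge\ \frac1{b_u}+\sum_{x\in N_G(u)\cap N_G(v)}\Bigl(\frac1{c_{ux}}-1\Bigr).
\]
So you may lower $r_{u0}$ from $1+t_u$ to $1/b_u$. You may also raise each edge entry to $r_{ux}=c_{xu}+c_{ux}^{-1}-1$. The row sum stays at most $n\,q_G(vu)$, and the entries remain positive because $c_{ux}\le 1$ and $c_{xu}>0$. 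Now $r_{0u}r_{u0}=1$. For adjacent $x,u\in N_G(v)$, set $b=c_{xu}=q_{G_x}(vu)$ and $c=c_{ux}=q_{G_u}(vx)$. (Your ``$q_{G_x}(xu)$'' is undefined, since $x\notin V(G_x)$.) Then
\[
  r_{ux}r_{xu}-1=\Bigl(\frac1c-1+b\Bigr)\Bigl(\frac1b-1+c\Bigr)-1=\frac{(1-b)(1-c)(1+bc)}{bc}\ge0 .
\]
This identity is the real proof of the two-variable inequality. Your line ``at least $bc\cdot b^{-1}c^{-1}=1$ after rearrangement'' is not an argument. With this $R$, \cref{lem:rows} finishes the proof exactly as in your last step.
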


\begin{proof}
We prove \eqref{eq:local-restate} by induction on $n$. The case $n=1$ is immediate since $p_G(v) = 1$ and the second summand is $0$. Now we assume $n \geq 2$ and the theorem is true for all graphs on at most $(n - 1)$ vertices.

Fix $v\in V(G)$ and
put
\[
  U=N_G(v),\qquad d=|U|,\qquad F=G[U].
\]
For each $u\in U$, let
\[
  G_u=G[N_G(u)],
  \qquad
  p_u=p_{G_u}(v),
  \qquad
  t_u=|N_G(u)\setminus(U\cup\{v\})|.
\]
For every oriented edge $ux\in E(F)$, set
\[
  q_{u,x}=q_{G_u}(vx),
\]
the probability that $v$ and $x$ are separated by the Caro--Wei
process on $G_u$.  The direction matters: $q_{u,x}$ is computed in
$G_u$, whereas $q_{x,u}$ is computed in $G_x$.

For $u\in U$, the graph $G_u$ has $1+d_F(u)+t_u<n$ vertices, and the
neighbors of $v$ inside $G_u$ are exactly $N_F(u)$.  Applying the
induction hypothesis to $v$ in $G_u$ gives
\[
  \frac1{p_u}
  +\sum_{x\in N_F(u)}\frac1{q_{u,x}}
  \le 1+d_F(u)+t_u,
\]
or equivalently
\begin{equation}\label{eq:budget}
  t_u\ge
  \frac1{p_u}-1
  +\sum_{x\in N_F(u)}\left(\frac1{q_{u,x}}-1\right).
\end{equation}

By \eqref{eq:pivot-rec},
\begin{equation}\label{eq:root-row}
  n\,p_G(v)=1+\sum_{x\in U}p_x.
\end{equation}
For $u\in U$, the common neighbors of $u$ and $v$ are $N_F(u)$, and
\[
  |N_G(v)\mathbin{\triangle}N_G(u)|
  =|N_G(u)|+|N_G(v)|-2|N_G(u)\cap N_G(v)|= d+1+t_u-d_F(u).
\]
Thus \eqref{eq:sep-rec}, followed by \eqref{eq:budget},
gives
\begin{align}
  n\,q_G(vu)
  &=d+1+t_u-d_F(u)
    +\sum_{x\in N_F(u)}q_{x,u}\notag\\
  &\ge d+\frac1{p_u}-d_F(u)
    +\sum_{x\in N_F(u)}
      \left(\frac1{q_{u,x}}-1+q_{x,u}\right).
  \label{eq:edge-row}
\end{align}

We now package \eqref{eq:root-row}--\eqref{eq:edge-row} as row sums.
Let $I=\{0\}\cup U$ and define a positive matrix $R=(r_{ij})_{i,j\in I}$
by $r_{ii}=1$,
\[
  r_{0x}=p_x,\qquad r_{x0}=p_x^{-1}\qquad(x\in U),
\]
and, for distinct $x,u\in U$,
\[
  r_{xu}=
  \begin{cases}
    1, & xu\notin E(F),\\[2mm]
    q_{x,u}^{-1}-1+q_{u,x}, & xu\in E(F).
  \end{cases}
\]
The opposite entries have product at least one.  This is immediate for
pairs involving $0$ and for nonedges of $F$.  For $xu\in E(F)$,
writing $b=q_{x,u}$ and $c=q_{u,x}$,
\[
  \left(\frac1b-1+c\right)
  \left(\frac1c-1+b\right)-1
  =\frac{(1-b)(1-c)(1+bc)}{bc}\ge0.
\]

Let $s_i=\sum_{j\in I}r_{ij}$.  By \eqref{eq:root-row},
$s_0=n p_G(v)$.  For $u\in U$, there are
$d-1-d_F(u)$ nonneighbors of $u$ in $U\setminus\{u\}$, so
\begin{align*}
  s_u
  &=1+\frac1{p_u}+d-1-d_F(u)
    +\sum_{x\in N_F(u)}
      \left(\frac1{q_{u,x}}-1+q_{x,u}\right)\le n\,q_G(vu).
\end{align*}
The last inequality is \eqref{eq:edge-row}.  Therefore \cref{lem:rows} yields
\[
  \frac1{n p_G(v)}
  +\sum_{u\in U}\frac1{n q_G(vu)}
  \le \frac1{s_0}+\sum_{u\in U}\frac1{s_u}
  \le1.
\]
Multiplying by $n$ completes the proof.
\end{proof}
We remark that the proof of \eqref{eq:local} here is essentially
analytic.  Since \eqref{eq:local} concerns natural statistics
of a single random process, and holds with equality for all complete
multipartite graphs, it would be interesting to find a combinatorial
interpretation.
\section{From the local estimate to a PSD inequality}\label{sec:reduction}
In this section, we prove the chain of implications
\begin{center}
  Lemma~\ref{lem:local} $\Longrightarrow$ Theorem~\ref{thm:partition} $\Longrightarrow$  Lemma~\ref{lem:psd} $\Longrightarrow$  Theorem~\ref{thm:main}
\end{center}
Each implication was sketched in the introduction.
\begin{proof}[Proof of Theorem~\ref{thm:partition}]
Let $\mathcal{P}$ denote the random partition obtained in the Caro--Wei process, and let $R_G$ denote the number of pivots. Since the pivots form a clique, $R_G \leq \omega$ in every outcome; thus, by linearity of expectation,
$$\sum_v p_G(v) = \mathbb{E}R_G \leq \omega.$$
Summing \eqref{eq:local} over all $v \in V$ gives
\[
  \sum_{v \in V} \frac1{p_G(v)}
  +2\sum_{uv\in E(G)}\frac1{q_G(uv)}
  \le n^2.
\]
In addition, the Cauchy--Schwarz inequality shows that
\[
  \sum_{v \in V}\frac1{p_G(v)}
  \ge \frac{n^2}{\sum_v p_G(v)}
  \ge\frac{n^2}{\omega}.
\]
Substitution proves \eqref{eq:harmonic}:
$$\sum_{uv\in E(G)}\frac1{q_G(uv)}
  \le \frac12\left(1-\frac{1}{\omega}\right)n^2.$$
Therefore, the weighted Cauchy--Schwarz inequality gives
\[
  \Bigl(\sum_{e\in E(G)}\sqrt{a_e}\Bigr)^{\!2}
  \le\Bigl(\sum_{e\in E(G)}a_e\,q_G(e)\Bigr)
    \Bigl(\sum_{e\in E(G)}\frac1{q_G(e)}\Bigr)
  \le \frac12\left(1-\frac{1}{\omega}\right)n^2\,
      \mathbb{E}\,W_{\mathcal P}(a),
\]
which rearranges to the claimed bound.
\end{proof}

\begin{proof}[Proof of \cref{lem:psd}]
The edgeless case is trivial, so assume that $G$ is nonempty, and let
$M$ be doubly nonnegative.  Let $\mathcal P$ be the random partition
provided by \cref{thm:partition}, and let $W_{\mathcal P}(M)$ be the
mass of $M$ on the edges of $G$ separated by $\mathcal P$.

We first bound $W_{\mathcal P}(M)$ from above in each outcome.  Fix an
outcome with blocks $B_1,\ldots,B_r$, where $r\le\omega$, and let
$z_i$ be the indicator vector of $B_i$, so that $\sum_{i=1}^r
z_i=\one$.  By entrywise nonnegativity of $M$,
\begin{equation}\label{eq:W-split}
  W_{\mathcal P}(M)
  \le\sum_{i<j}z_i^{\top}Mz_j
  =\frac12\Bigl(\one^{\top}M\one-\sum_{i=1}^r z_i^{\top}Mz_i\Bigr).
\end{equation}
To bound the subtracted sum from below, factor $M=B^{\top}B$ and set
$w_i=Bz_i$.  Applying the Cauchy--Schwarz inequality to the $r$
vectors $w_1,\ldots,w_r$,
\[
  \one^{\top}M\one
  =\Bigl\lVert\sum_{i=1}^r w_i\Bigr\rVert^2
  \le r\sum_{i=1}^r\lVert w_i\rVert^2
  =r\sum_{i=1}^r z_i^{\top}Mz_i.
\]
Substituting $\sum_i z_i^{\top}Mz_i\ge\frac1r\one^{\top}M\one$ into
\eqref{eq:W-split} yields the pointwise bound
\begin{equation}\label{eq:upper-W}
  W_{\mathcal P}(M)
  \le\frac12\Bigl(1-\frac1r\Bigr)\one^{\top}M\one
  \le\frac12\cK\one^{\top}M\one.
\end{equation}
On the other hand, \cref{thm:partition} applied to the edge weights
$a_{uv}=M_{uv}$ gives
\begin{equation}\label{eq:lower-W}
  \mathbb{E}\,W_{\mathcal P}(M)
  \ge\frac{2}{\cK n^2}
     \Bigl(\sum_{uv\in E(G)}\sqrt{M_{uv}}\Bigr)^2.
\end{equation}
Combining \eqref{eq:upper-W} and \eqref{eq:lower-W} and rearranging,
\[
  \Bigl(\sum_{uv\in E(G)}\sqrt{M_{uv}}\Bigr)^2
  \le\frac14\cK^2 n^2\,\one^{\top}M\one,
\]
which is \eqref{eq:psd-ms} after taking square roots.
\end{proof}

\begin{proof}[Proof of \cref{thm:main}]
Write $A=A^+-A^-$, where $A^+,A^-\succeq0$ are the positive and
negative parts of the spectral decomposition of the adjacency matrix
$A$, so that $A^+A^-=0$ and $s^{+}(G)=\normF{A^{+}}^2$.  Put
$M=A^{+}\circ A^{+}$.  Then $M$ is entrywise
nonnegative, and it is positive semidefinite by the Schur product
theorem \cite[Theorem~7.5.3]{HornJohnson2013}; hence $M$ is doubly
nonnegative.  Its total mass is
\[
  \one^{\top}M\one
  =\sum_{u,v\in V}\bigl(A^{+}_{uv}\bigr)^2
  =\normF{A^{+}}^2
  =s^{+}(G).
\]

Next, since $A^+A^-=0$,
\[
  \operatorname{tr}(AA^{+})
  =\operatorname{tr}\bigl((A^+-A^-)A^{+}\bigr)
  =\normF{A^{+}}^2
  =s^{+}(G),
\]
while, since $A$ is a $0/1$ matrix with zero diagonal,
\[
  \operatorname{tr}(AA^{+})
  =\sum_{u\ne v}A_{uv}A^{+}_{uv}
  =2\sum_{uv\in E(G)}A^{+}_{uv}.
\]
Therefore
\[
  2\sum_{uv\in E(G)}\sqrt{M_{uv}}
  =2\sum_{uv\in E(G)}\bigl\lvert A^{+}_{uv}\bigr\rvert
  \ \ge\ \operatorname{tr}(AA^{+})
  =s^{+}(G).
\]
Combining this with \cref{lem:psd} applied to $M$ yields
\[
  s^{+}(G)
  \le 2\sum_{uv\in E(G)}\sqrt{M_{uv}}
  \le \left(1-\frac1\omega\right)n\,\sqrt{\one^{\top}M\one}
  = \left(1-\frac1\omega\right)n\,\sqrt{s^{+}(G)}.
\]
If $s^{+}(G)=0$, the claimed bound is trivial; otherwise dividing
by $\sqrt{s^{+}(G)}$ completes the proof.
\end{proof}

In particular, $n-\sqrt{s^{+}(G)}\ge n/\omega>0$, so \cref{thm:main}
rearranges to $\frac{n}{n-\sqrt{s^{+}(G)}}\le\omega$, proving
\cref{conj:EW}.

\section*{Acknowledgments and AI disclosure}
We used ChatGPT to generate exploratory code for testing conjectures and to help polish the language of proof drafts. All mathematical content was verified by the authors. The core ideas and the overall strategy of the proof were developed independently by the authors; the motivation and the path leading to the argument are described in the overview; any suggestions from the tool were modified or discarded as needed, and the authors take full responsibility for the correctness and originality of the paper.

\end{document}